\newtheorem{theorem}{\sc Theorem}[section]
\newtheorem{lemma}[theorem]{\sc Lemma}
\newtheorem{corollary}[theorem]{\sc Corollary}
\newtheorem{Index Convention}{Index Convention}
\begin{document}
\title[Commutators]{Commutators of elements of coprime orders in finite groups}
\author{Pavel Shumyatsky}
\address{Department of Mathematics, University of Brasilia,
Brasilia-DF, 70910-900 Brazil}
\email{pavel@unb.br}
\thanks{This work was supported by CNPq-Brazil}
\keywords{}
\subjclass{}
\begin{abstract} This paper is an attempt to find out which properties of a finite group $G$ can be expressed in terms of commutators of elements of coprime orders. A criterion of solubility of $G$ in terms of such commutators is obtained. We also conjecture that every element of a nonabelian simple group is a commutator of elements of coprime orders and we confirm this conjecture for the alternating groups.
\end{abstract}
\maketitle
\section{Introduction}
Let $w$ be a group word, i.e., an element of the free group on $x_1,\dots,x_d$. For a group G we denote by $w(G)$ the subgroup generated by the $w$-values. The subgroup $w(G)$ is called the verbal subgroup of $G$ corresponding to the word $w$. An important family of words are the lower central words $\gamma_k$, given by
\[
\gamma_1=x_1,
\qquad
\gamma_k=[\gamma_{k-1},x_k]=[x_1,\ldots,x_k],
\quad
\text{for $k\geq 2$.}
\]
Here, as usual, we write $[x,y]$ to denote the commutator $x^{-1}y^{-1}xy$.
The corresponding verbal subgroups $\gamma_k(G)$ are the terms of the lower central series of $G$. Another interesting sequence of words are the derived words $\delta_k$, on $2^k$ variables, which are defined recursively by
\[
\delta_0=x_1,
\quad
\delta_k=[\delta_{k-1}(x_1,\ldots,x_{2^{k-1}}),\delta_{k-1}(x_{2^{k-1}+1},\ldots,x_{2^k})],
\quad
\text{for $k \geq 1$.}
\]
The verbal subgroup that corresponds to the word $\delta_k$ is the familiar $k$th derived subgroup of $G$ usually denoted by $G^{(k)}$.

It is well-known that many properties of $G$ can be detected by just looking at the set of $w$-values. For example, the group $G$ is nilpotent of class at most $k$ if and only if $\gamma_{k+1}(G)=1$ and $G$ is soluble with derived length at most $k$ if and only if $\delta_{k}(G)=1$.

In the case where $G$ is finite some important group-theoretical properties can be detected by studying the set of commutators $[x,y]$, where $x$ and $y$ are elements of coprime orders. In particular, it is easy to show that a finite group $G$ is nilpotent if and only if $[x,y]=1$ for all $x,y\in G$ such that $(|x|,|y|)=1$. The present paper is an attempt to find out which properties of a finite group can be expressed in terms of commutators of elements of coprime orders.

There is no canonical way to define the $\gamma_k$-commutators and $\delta_k$-commutators in elements of coprime orders of a finite group $G$. Thus, we propose the following definitions.

Let $G$ be a finite group and $k$ a nonnegative integer. Every element of $G$ is a $\gamma_1^*$-commutator as well as a $\delta_0^*$-commutator.
Now let $k\geq 2$ and let $X$ be the set of all elements of $G$ that are powers of $\gamma_{k-1}^*$-commutators. An element $x$ is a $\gamma_k^*$-commutator if there exist $a\in X$ and $b\in G$ such that $x=[a,b]$ and $(|a|,|b|)=1$. For $k\geq 1$ let $Y$ be the set of all elements of $G$ that are powers of $\delta_{k-1}^*$-commutators. The element $x$ is a $\delta_k^*$-commutator if there exist $a,b\in Y$ such that $x=[a,b]$ and $(|a|,|b|)=1$. The subgroups of $G$ generated by all $\gamma_k^*$-commutators and all  $\delta_k^*$-commutators will be denoted by $\gamma_k^*(G)$ and $\delta_k^*(G)$, respectively. One can easily see that if $N$ is a normal subgroup of $G$ and $x$ an element whose image in $G/N$ is a $\gamma_k^*$-commutator (respectively a $\delta_k^*$-commutator), then there exists a $\gamma_k^*$-commutator $y\in G$ (respectively a $\delta_k^*$-commutator) such that $x\in yN$.
\section{$\delta_k^*$-Commutators}
For a finite group $G$ we have $\gamma_k^*(G)=1$ if and only if $G$ is nilpotent. Indeed, we have already remarked that if $G$ is nilpotent then $\gamma_2^*(G)=1$. Suppose that $\gamma_k^*(G)=1$ but $G$ is not nilpotent. We can assume that the counter-example $G$ is chosen with minimal possible order. Then every proper subgroup of $G$ is nilpotent. Finite groups all of whose proper subgroups are nilpotent have been classified by Schmidt in \cite{schmidt}. In particular, such groups are soluble. Therefore $G$ contains a minimal normal abelian $p$-subgroup $M$ for some prime $p$. By induction $G/M$ is nilpotent. If $M$ commutes with every $p'$-element of $G$, it follows easily that $G$ is nilpotent, a contradiction. Hence $G=M\langle x\rangle$ for some $p'$-element $x$ of $G$ and $M=[M,x]$. Since $M$ is abelian, it is clear that each element of $M$ can be written in the form $[m,x]$ for suitable $m\in M$. Further, the obvious induction shows that each element of $M$ can be written in the form $[m,\underbrace{x,\dots,x}_l]$ for suitable $m\in M$ and an arbitrary positive integer $l$. Since all elements of the form $[m,\underbrace{x,\dots,x}_l]$ are $p$-elements and $x$ is a $p'$-element, we conclude that $$[M,\underbrace{x,\dots,x}_{k-1}]=\gamma_k^*(G)=1.$$ This yields a contradiction since $G$ is not nilpotent. 

We will now study the influence of $\delta_k^*$-commutators on the structure of $G$. In what follows we use without explicit references the fact that any $\delta_k^*$-commutator in $G$ can be viewed as a $\delta_{i}^*$-commutator for each $i\leq k$.
We start with the following well-known lemma.
\begin{lemma}\label{1} Let $\alpha$ be an automorphism of a finite group $G$ with $(|\alpha|,|G|)=1$.
\begin{enumerate}
\item $G=[G,\alpha]C_{G}(\alpha)$.
\item $[G,\alpha]=[G,\alpha,\alpha]$. In particular, if $[G,\alpha,\alpha]=1$ then $\alpha=1$.
\end{enumerate}
\end{lemma}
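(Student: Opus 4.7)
My plan is to prove (1) by induction on $|G|$ — which is the bulk of the work — and then to derive (2) from (1) by a quick commutator identity in the natural semidirect product $G\rtimes\langle\alpha\rangle$.

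For (1), I will set $K:=[G,\alpha]C_G(\alpha)$ and argue that $K=G$. Given a proper nontrivial $\alpha$-invariant normal subgroup $N\trianglelefteq G$, the inductive hypothesis applied to $N$ gives $N=[N,\alpha]C_N(\alpha)\subseteq K$, and applied to $G/N$ gives $G/N=[G/N,\alpha]C_{G/N}(\alpha)$; combining these yields $G=K$ provided one knows that every $\alpha$-fixed coset in $G/N$ has an $\alpha$-fixed representative, i.e.\ $C_{G/N}(\alpha)=C_G(\alpha)N/N$. This coprime fixed-point-lifting property is the real obstacle: the standard argument observes that if $gN$ is $\alpha$-fixed then $n:=g^{-1}g^\alpha$ lies in $N$ and is an $\alpha$-cocycle, and produces $m\in N$ with $n=m^{-1}m^\alpha$ either by a direct averaging over $\langle\alpha\rangle$ using the coprime hypothesis or by invoking Schur--Zassenhaus inside $N\langle\alpha\rangle$; after that, $gm^{-1}\in C_G(\alpha)$. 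When $G$ has no proper nontrivial $\alpha$-invariant normal subgroup there is nothing to do, since $[G,\alpha]$ is itself $\alpha$-invariant and normal, forcing $[G,\alpha]\in\{1,G\}$ and making the decomposition trivial.

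For (2), set $H=[G,\alpha]$ and use (1) to write $G=HC_G(\alpha)$. Any generator of $H$ has the form $[g,\alpha]$ with $g=hc$, $h\in H$, $c\in C_G(\alpha)$, and exploiting $c^\alpha=c$ a direct calculation gives
\[
[hc,\alpha]=(hc)^{-1}(hc)^\alpha=c^{-1}h^{-1}h^\alpha c=[h,\alpha]^c=[h^c,\alpha].
\]
Since $H\trianglelefteq G$ we have $h^c\in H$, so $[g,\alpha]\in[H,\alpha]=[G,\alpha,\alpha]$, which gives $[G,\alpha]\subseteq[G,\alpha,\alpha]$; the reverse inclusion is obvious. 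The ``in particular'' clause is then immediate: $[G,\alpha,\alpha]=1$ forces $[G,\alpha]=1$ by (2), so $\alpha$ fixes $G$ pointwise and must equal $1$. In summary, the only genuinely nonroutine step in the whole argument is the coprime lifting of fixed points used in the induction for (1); part (2) follows formally once (1) is in hand.
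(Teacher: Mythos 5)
Your argument is correct, but note that the paper offers no proof at all of this lemma: it is quoted as a ``well-known lemma'' of coprime action (it is the standard fact found, e.g., in Gorenstein's \emph{Finite Groups}), so there is no internal proof to compare against. Your route is essentially the classical one: induct on $|G|$, reduce modulo a proper nontrivial $\alpha$-invariant normal subgroup $N$, and use the coprime fixed-point-lifting property $C_{G/N}(\alpha)=C_G(\alpha)N/N$, with the degenerate case handled by the normality and $\alpha$-invariance of $[G,\alpha]$; part (2) then follows formally from (1) via $[hc,\alpha]=[h^c,\alpha]$, exactly as you compute. Two small refinements are worth recording. First, the ``direct averaging over $\langle\alpha\rangle$'' alternative for solving $n=m^{-1}m^\alpha$ only makes sense when $N$ is abelian; for general $N$ you must use the Schur--Zassenhaus route you mention, where both $\langle\alpha\rangle$ and $\langle\alpha\rangle^g$ are complements to $N$ in $N\langle\alpha\rangle$ and are conjugate by some $m\in N$ (here the conjugacy part of Schur--Zassenhaus is unconditional because the complement $\langle\alpha\rangle$ is cyclic, hence soluble, so no appeal to Feit--Thompson is needed). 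Second, conjugacy of complements a priori only gives that $c=gm^{-1}$ normalizes $\langle\alpha\rangle$; to get $c\in C_G(\alpha)$ one adds the one-line observation that $[c,\alpha]\in N\cap\langle\alpha\rangle=1$, since $c$ lies in the $\alpha$-fixed coset $gN$. With these points made explicit, the proof is complete and is the standard one.
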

We will also require the following lemma from \cite{lau}.
\begin{lemma}\label{2}
Let $A$ be a group of automorphisms of a finite group $G$ with $(|A|,|G|)=1$. Suppose that $B$ is a normal subset of $A$ such that $A=\langle B\rangle$. Let $i\geq 1$ be an integer. Then $[G,A]$ is generated by the subgroups of the form $[G,b_1,\dots,b_i]$, where $b_1,\dots,b_i\in B$.
\end{lemma}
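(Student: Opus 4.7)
The plan is to handle the base case $i=1$ directly, and then bootstrap to arbitrary $i$ by exploiting the coprime identity $[G,\alpha] = [G,\alpha,\alpha]$ from Lemma~\ref{1}(2).

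For the base case, set $N = \langle [G,b] : b \in B\rangle$. The inclusion $N \leq [G,A]$ is immediate, so the task is to reverse it. First I would check that $N$ is normal in $G$ and $A$-invariant, and then argue that $A$ acts trivially on $G/N$. Normality in $G$ follows from the commutator identity $[x,b]^g = [xg,b][g,b]^{-1}$, since both factors on the right lie in $[G,b] \leq N$. The $A$-invariance follows from $[x,b]^a = [x^a,b^a]$ together with $b^a \in B$, which holds because $B$ is a normal subset of $A$. Once $N$ is normal in $G$ and $A$-invariant, $G/N$ carries an induced action of $A$, and every $b \in B$ acts trivially on it because $[G,b] \leq N$. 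Since $B$ generates $A$, the whole of $A$ acts trivially on $G/N$, which forces $[G,A] \leq N$ and completes the case $i=1$.

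For the inductive step, the key observation is that Lemma~\ref{1}(2), applied to the coprime automorphism $b \in B$ acting on $G$, gives $[G,b] = [G,b,b]$. Since $[G,b]$ is $b$-invariant and $b$ still acts coprimely on it, applying the same lemma to $[G,b]$ yields $[G,b,b] = [G,b,b,b]$, and an easy induction gives $[G,b] = [G,\underbrace{b,\dots,b}_{i}]$ for every $i \geq 1$. The latter subgroup is contained in $K_i := \langle [G,b_1,\dots,b_i] : b_\ell \in B\rangle$ by the choice $b_1 = \cdots = b_i = b$. Taking the join over all $b \in B$ gives $\langle [G,b] : b \in B\rangle \leq K_i$, which by the base case is exactly $[G,A]$. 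The reverse inclusion $K_i \leq [G,A]$ is clear since $[G,A]$ is $A$-invariant and therefore closed under successive commutation with elements of $A$. Hence $K_i = [G,A]$, as required.

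I do not expect a serious obstacle here. All the substantive content lives in the case $i = 1$, where the hypothesis that $B$ is a normal subset of $A$ is used precisely to give $A$-invariance of $N$, and the hypothesis $A = \langle B\rangle$ is used to promote the trivial action on $G/N$ from $B$ to all of $A$. The inductive step is essentially formal once one notices that the identity $[G,b] = [G,b,b]$ iterates under coprime action; no three-subgroups-style manipulation or normal-closure argument is needed.
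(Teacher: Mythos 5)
Your argument is correct. Note, however, that the paper does not prove this lemma at all: it is quoted from \cite{lau}, so there is no in-paper proof to compare against; your write-up is a valid self-contained substitute. The base case $i=1$ is the standard argument (and, as your proof makes clear, needs neither coprimality nor finiteness: normality of the subset $B$ gives $A$-invariance of $N=\langle [G,b]\colon b\in B\rangle$, normality of each $[G,b]$ in $G$ gives an induced $A$-action on $G/N$, and $A=\langle B\rangle$ promotes the trivial action of $B$ to all of $A$). For $i\geq 2$ your ``diagonal'' trick is the right move: iterating Lemma \ref{1}(2) on the $b$-invariant subgroup $[G,b]$ gives $[G,b]=[G,\underbrace{b,\dots,b}_{i}]$, so already the subgroups $[G,b_1,\dots,b_i]$ with $b_1=\cdots=b_i$ generate $[G,A]$, and the reverse inclusion is immediate from $A$-invariance of $[G,A]$. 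This neatly sidesteps the pitfall one would hit by trying to induct via $[G,A]=[G,A,A]$ and then rewriting $[\langle [G,b_1]\colon b_1\in B\rangle,b_2]$ in terms of the generators, where generation does not pass through commutation in any obvious way. The only hypotheses you use are exactly the ones available: coprimality (through Lemma \ref{1}(2)) only enters for $i\geq 2$, and normality of $B$ together with $A=\langle B\rangle$ only in the case $i=1$.
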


The next lemma will be very useful.
\begin{lemma}\label{1113} Let $G$ be a finite group and $y_1,\dots,y_k$ $\delta_k^*$-commutators in $G$. Suppose the elements $y_1,\dots,y_k$ normalize a subgroup $N$ such that $(|y_i|,|N|)=1$ for every $i=1,\dots,k$. Then for every $x\in N$ the element $[x,y_1,\dots,y_k]$ is a $\delta_{k+1}^*$-commutator.
\end{lemma}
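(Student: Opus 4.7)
I would proceed by induction on $k$. The inductive step ($k\geq 2$) is straightforward using the nesting $\delta_k^*\subseteq\delta_{k-1}^*$ noted in the paper: the elements $y_1,\dots,y_{k-1}$ qualify as $\delta_{k-1}^*$-commutators satisfying the hypothesis at level $k-1$, so by the inductive hypothesis the element $z:=[x,y_1,\dots,y_{k-1}]$ is a $\delta_k^*$-commutator. Since each $y_i$ normalizes $N$ and $x\in N$, all intermediate commutators stay in $N$; in particular $z\in N$, and hence $(|z|,|y_k|)=1$. Then $[x,y_1,\dots,y_k]=[z,y_k]$ is a commutator of two $\delta_k^*$-commutators of coprime orders, which by definition is a $\delta_{k+1}^*$-commutator.

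The base case $k=1$ requires separate treatment. We must show that $[x,y_1]$ is a $\delta_2^*$-commutator. Applying Lemma~\ref{1}(1) to the coprime action of $y_1$ on $N$, I decompose $x=uc$ with $u\in[N,y_1]$ and $c\in C_N(y_1)$. Since $[c,y_1]=1$, a short calculation with the identity $[ab,y_1]=[a,y_1]^b[b,y_1]$ gives $[x,y_1]=[u,y_1]^c=[u^c,y_1]$, where $u^c\in[N,y_1]$. So without loss of generality $x\in[N,y_1]$.

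The plan is then to express such $x$ in the form $x=[n,y_1]\cdot c_0$ for some $n\in N$ and $c_0\in C_N(y_1)$. Granted this, using $[c_0,y_1]=1$ one computes
\[
[x,y_1]=[[n,y_1]c_0,y_1]=[[n,y_1],y_1]^{c_0}=[[n^{c_0},y_1],y_1],
\]
which displays $[x,y_1]$ as $[A,B]$ with $A=[n^{c_0},y_1]$ a $\delta_1^*$-commutator (since $n^{c_0}\in N$ has order coprime to $|y_1|$), $B=y_1$ a $\delta_1^*$-commutator by hypothesis, and $(|A|,|B|)=1$ because $A\in[N,y_1]\subseteq N$. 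Hence $[x,y_1]$ is a $\delta_2^*$-commutator.

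The main obstacle is justifying the decomposition $x=[n,y_1]\cdot c_0$ for every $x\in[N,y_1]$, equivalently the set identity $\{[n,y_1]:n\in N\}\cdot C_N(y_1)=N$. The fibers of $\phi\colon n\mapsto[n,y_1]$ are left cosets of $C_N(y_1)$ in $N$, so $|\mathrm{Im}(\phi)|=|N|/|C_N(y_1)|$; one then verifies that the product map $\mathrm{Im}(\phi)\times C_N(y_1)\to N$ is injective (using the cocycle-like identity $\phi(cn)=\phi(n)$ for $c\in C_N(y_1)$ together with a coprime-action calculation tied to Lemma~\ref{1}(2)), which forces surjectivity by a size comparison.
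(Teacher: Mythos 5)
Your inductive step is exactly the induction the paper runs, and it is correct: the nesting of $\delta_k^*$-values, the fact that the intermediate commutators stay in $N$, and the coprimality of orders give the passage from level $i$ to level $i+1$ with no difficulty. You have also correctly located where the actual content of the lemma sits, namely the initial step $k=1$, where one must gain a level and exhibit $[x,y_1]$ as a commutator of two powers of $\delta_1^*$-commutators even though $x\in N$ is arbitrary; granting your factorization $x=[n,y_1]c_0$ with $c_0\in C_N(y_1)$, the computation $[x,y_1]=[[n^{c_0},y_1],y_1]$ and the ensuing identification of a $\delta_2^*$-commutator are correct.

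The gap is that the factorization $N=\{[n,y_1]:n\in N\}\cdot C_N(y_1)$ is never proved, and the injectivity of the product map, which you dismiss as something "one then verifies", is the entire content of that claim. Write $C=C_N(y_1)$ and put $u=mn^{-1}$; a direct calculation gives $[m,y_1]^{-1}[n,y_1]=\bigl([u,y_1]^{-1}\bigr)^{\,n^{y_1}}$, so injectivity of the map $\mathrm{Im}(\phi)\times C\to N$ is equivalent to the assertion that no nontrivial $N$-conjugate of a commutator $[u,y_1]$ lies in $C$. The elementwise consequence of coprimality that is actually available (from Lemma \ref{1}(2), or from the one-line argument: $g^{y_1}=gc$ with $c\in C$ forces $c^{|y_1|}=1$, hence $c=1$) only covers the case of a trivial conjugator, i.e. $[u,y_1]\in C\Rightarrow[u,y_1]=1$; it says nothing about $[u,y_1]^g\in C$ for arbitrary $g\in N$, and no standard coprime-action lemma delivers that stronger statement. (A minor slip in the same passage: the fibers of $\phi$ are the right cosets $Cn$, not left cosets, though this does not affect the count $|\mathrm{Im}(\phi)|=|N:C|$.) So, as written, the base case --- precisely the step the paper absorbs into its "easy induction" --- is not established: you must either give a genuine proof of the set-factorization (it is a substantially stronger claim than Lemma \ref{1}) or produce some other expression of $[x,y_1]$ as $[a,b]$ with $a,b$ powers of $\delta_1^*$-commutators of coprime orders.
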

\begin{proof} We note that all elements of the form $[x,{y_1,\dots,y_s}]$ are of order prime to $|y_{s+1}|$. An easy induction on $i$ shows that whenever $i\leq k$ the element $[x,y_1,\dots,y_i]$ is a $\delta_{i+1}^*$-commutator. The lemma follows.
\end{proof}

The famous Burnside $p^aq^b$-Theorem says that a finite group whose order is divisible by only 2 primes is soluble (see \cite[Theorem 4.3.3]{go}). Our next result may be viewed as a generalization of the Burnside theorem. As usual, $O_\pi(G)$ denotes the largest normal $\pi$-subgroup of $G$.
\begin{theorem}\label{suzuki} Let $k$ be a positive integer, $\pi$ a set consisting of at most two primes and $G$ a finite group in which all $\delta_k^*$-commutators are $\pi$-elements. Then $G$ is soluble and $\delta_k^*(G)\leq O_\pi(G)$.
\end{theorem}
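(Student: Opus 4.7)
My plan is to establish the solubility first and then the containment $\delta_k^*(G)\leq O_\pi(G)$. For solubility I would induct on $|G|$ and let $G$ be a minimal counterexample. Because the image in any quotient of a $\delta_k^*$-commutator is again a $\delta_k^*$-commutator (by the remark at the end of the introduction), the hypothesis descends to every quotient of $G$, and so by minimality every proper quotient of $G$ is soluble. In particular the soluble radical of $G$ is trivial, so any minimal normal subgroup $N$ of $G$ has the form $N = S_1 \times \cdots \times S_r$ with $S_i$ isomorphic copies of a nonabelian finite simple group $S$. The plan is to derive a contradiction by showing that every prime divisor of $|S|$ must lie in $\pi$; combined with Burnside's $p^aq^b$-theorem, which forces $|\pi(S)| \geq 3$, and with $|\pi|\leq 2$, this would finish the argument.

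Since conjugation in $G$ preserves orders and hence the property of being a $\delta_j^*$-commutator for every $j$, the set of $\delta_k^*$-commutators of $G$ contained in a fixed component $S_1$ is $S_1$-invariant and normally generates a subgroup of $S_1$, which by simplicity is either trivial or all of $S_1$; non-triviality is automatic because $S_1$ is non-nilpotent. I would then start from a nontrivial $\delta_1^*$-commutator in $S_1$ supplied by a pair of coprime-order non-commuting elements and iteratively apply Lemma \ref{1113}, taking the $y_i$'s to be previously produced commutators normalizing appropriately chosen subgroups of $S_1$ of coprime order, to build $\delta_k^*$-commutators inside $S_1$. The delicate — and main — obstacle is to show that the prime divisors of the orders of these $\delta_k^*$-commutators collectively exhaust $\pi(S)$; this is where I expect the paper to invoke a classification-type result of Suzuki (hinted at by the theorem's label) for nonabelian simple groups, combined with Burnside's $p^aq^b$-theorem applied to $\{p,q\}$-subgroups of $S_1$ for primes $p,q\notin\pi$.

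For the containment, with $G$ now known to be soluble, I would pass to $\bar G := G/O_\pi(G)$, which inherits the hypothesis and satisfies $O_\pi(\bar G) = 1$; it suffices to show $\delta_k^*(\bar G) = 1$. The Fitting subgroup $F := F(\bar G)$ is then a $\pi'$-group and $C_{\bar G}(F) \leq F$. For any $\delta_k^*$-commutator $y$ of $\bar G$ — a $\pi$-element, hence coprime to $|F|$ — iterating Lemma \ref{1}(2) yields $[F,y] = [F,\underbrace{y,\ldots,y}_{k-1}]$ when $k\geq 2$, while for $k=1$ one works with $[F,y]$ directly. In either case each generator of the resulting subgroup is a $\delta_k^*$-commutator — by Lemma \ref{1113} for $k\geq 2$ and by the coprime-order definition for $k=1$ — so it is at once a $\pi$-element (by hypothesis) and a $\pi'$-element (as it lies in $F$), hence trivial. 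Consequently $[F,y]=1$, so $y\in C_{\bar G}(F)\leq F$; since $y$ is simultaneously a $\pi$- and a $\pi'$-element, $y=1$. Therefore $\delta_k^*(\bar G)=1$, which gives $\delta_k^*(G)\leq O_\pi(G)$.
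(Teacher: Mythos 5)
The solubility half of your proposal is not yet a proof. After reducing to a minimal counterexample $G$ with trivial soluble radical and a minimal normal subgroup $S_1\times\cdots\times S_r$, you leave the decisive step --- that the $\delta_k^*$-commutators arising inside a nonabelian simple group cannot all be $\pi$-elements for a two-prime set $\pi$ --- as something you ``expect'' to follow from ``a classification-type result of Suzuki''. No such off-the-shelf result exists; for arbitrary simple groups the needed statement is essentially the coprime analogue of the Ore conjecture, which Section 3 of this paper only conjectures. The paper avoids this by using that the hypothesis is inherited by subgroups as well as by quotients, so a minimal counterexample has all proper subgroups soluble; Thompson's classification of minimal simple groups then leaves only $Sz(q)$, $L_2(q)$ and $L_3(3)$, and in each of these one manufactures, for every $n$, $\delta_n^*$-commutators of three distinct prime orders by an explicit bootstrap with strongly real elements and Lemma \ref{1113} (in $Sz(q)$, alternating between a generator $x$ of the cyclic subgroup of order $q-1$, an inverting involution $j$, and involutions of the Sylow $2$-subgroup, each written as an iterated coprime commutator in the others). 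Your side claim that nontriviality of $\delta_k^*$-commutators in $S_1$ is ``automatic because $S_1$ is non-nilpotent'' is also unjustified for $k\geq 2$: non-nilpotence yields $\delta_1^*$-commutators, and keeping the coprimality conditions alive through $k$ iterations is exactly the nontrivial content that the case analysis supplies. Since this is the heart of the theorem, the gap is essential.

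The containment half is close, but one step fails as written. With $F=F(\bar G)$ nonabelian in general, Lemma \ref{1113} shows that the particular elements $[u,\underbrace{y,\dots,y}_{k-1}]$ with $u\in F$ are $\delta_k^*$-commutators, but it does not show that the subgroup $[F,\underbrace{y,\dots,y}_{k-1}]=[F,y]$ is generated by such elements: $[F,y,y]$ is generated by the commutators $[v,y]$ with $v$ running over $[F,y]$, and such $v$ are products of commutators rather than single ones, so these natural generators are only known to be $\delta_2^*$-commutators. Hence ``each generator of the resulting subgroup is a $\delta_k^*$-commutator'' is unproved for $k\geq 3$. The gap is repairable: from $[u,\underbrace{y,\dots,y}_{k-1}]=1$ for all $u\in F$ pass to the abelian quotient $F/\Phi(F)$, on which $y$ acts as an automorphism $\sigma$ with $(\sigma-1)^{k-1}=0$; coprimality forces $\sigma=1$, and a coprime automorphism trivial on $F/\Phi(F)$ is trivial on $F$, so $[F,y]=1$ and your argument concludes. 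Alternatively, follow the paper: induct on $|G|$ with a minimal normal elementary abelian $r$-subgroup $M$ (so that $[M,\underbrace{x,\dots,x}_{k-1}]$ literally consists of left-normed commutators), deduce $[M,\delta_k^*(G)]=1$ via Lemma \ref{1}, and finish with Schur's theorem and $O_\pi(G)=1$. With that repair your $F(\bar G)$-argument is a legitimate, somewhat more direct alternative to the paper's induction; but as stated it has a hole, and the solubility half is missing its main ingredient.
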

\begin{proof} First we will prove that $G$ is soluble. Suppose that this is false and let $G$ be a counterexample of minimal possible order. Then $G$ is nonabelian simple and all proper subgroups of $G$ are soluble. The minimal simple groups have been classified by Thompson in his famous paper \cite{thompson}. It follows that $G$ is isomorphic with a group of type $Sz(q)$, $L_2(q)$ or $L_3(3)$. 

Suppose first that $G=Sz(q)$ is a Suzuki group. Let $Q$ be a Sylow 2-subgroup of $G$ and $K$ a (cyclic) subgroup of order $q-1$ that normalizes $Q$. Let $x$ be a generator of $K$. Choose an involution $j\in G$ such that $x^j=x^{-1}$. We remark that for every $y\in K$ there exists $y_1\in K$ such that $y=[y_1,j]$. Moreover for every $n\geq 1$ and every involution $a\in Q$ we have $a=[b,\underbrace{x,\dots,x}_{n-1}]$ for a suitable involution $b\in Q$. Using Lemma \ref{1113} it is easy to show that both $a$ and $x$ are $\delta_{n}^*$-commutator for every $n=0,1\dots$. Indeed suppose by induction that $n\geq 1$ and $x$ is a $\delta_{n-1}^*$-commutator. Lemma \ref{1113} shows that $a$ is a $\delta_{n}^*$-commutator. Since all involutions in $G$ are conjugate, we conclude that $j$ is a $\delta_{n}^*$-commutator. Now write $x=[y,\underbrace{j,\dots,j}_n]$ for suitable $y\in K$. Lemma \ref{1113} shows that $x$ is a $\delta_{n+1}^*$-commutator, as required. This argument actually shows that every strongly real element of odd order is a $\delta_{n}^*$-commutator for every $n$. Since $G$ contains strongly real elements of orders dividing $q-1$ and $q\pm r+1$, where $r^2=2q$, we obtain a contradiction.
Therefore in the case where $G=Sz(q)$ not all $\delta_{k}^*$-commutators are $\pi$-elements.

Other minimal simple groups can be treated in a similar way. Really, all involutions in those groups are conjugate. In all possible cases $G$ contains an elementary abelian 2-subgroup $R$ which is normalized by a strongly real element acting on $R$ irreducibly. Thus, in those groups all involutions and all strongly real elements of odd order are $\delta_{n}^*$-commutators for every $n$. Suppose $G=L_3(3)$. Then $G$ has strongly real element of order 3 which acts irreducibly on a cyclic subgroup of order 13. It follows that for every $n$ the group $G$ contains $\delta_{n}^*$-commutators of orders 2, 3 and 13.

If $G=L_2(q)$ where $q$ is even, $G$ contains strongly real elements of orders dividing $q-1$ and $q+1$ and we get a contradiction. If $G=L_2(q)$ where $q=p^s$ is odd, $G$ contains strongly real elements of orders dividing $(q-1)/2$ and $(q+1)/2$. Choose an element $x$ of prime order dividing $(q-1)/2$. We know that $x$ normalizes a Sylow $p$-subgroup $Q$ in $G$ and $Q=[Q,\underbrace{x,\dots,x}_n]$. Thus, again by Lemma \ref{1113} it follows that $G$ contains $\delta_{n}^*$-commutators of order $p$.

Hence, $G$ is soluble and we will now prove that $\delta_k^*(G)\leq O_\pi(G)$. Again we assume that the claim is false and let $G$ be a counterexample of minimal possible order. Then $O_\pi(G)=1$. Let $M$ be a minimal normal subgroup of $G$. We know that $G$ is soluble and therefore $M$ is an elementary abelian $r$-group for some prime $r\not\in\pi$. Choose a $\delta_{k}^*$-commutator $x\in G$. By Lemma \ref{1113} every element of $[M,\underbrace{x,\dots,x}_{k-1}]$ is a $\delta_{k}^*$-commutator. Since the orders of $\delta_{k}^*$-commutators in $G$ are not divisible by $r$, we conclude that $[M,\underbrace{x,\dots,x}_{k-1}]=1$. Lemma \ref{1} now shows that $x$ commutes with $M$. Denote $\delta_k^*(G)$ by $N$. It follows that $[M,N]=1$. By induction the image of $N$ in $G/M$ is a $\pi$-group. Hence, $N/Z(N)$ is a $\pi$-group. Schur's Theorem now shows that $N'$ is a $\pi$-group \cite[p. 102]{rob}. Since $O_\pi(G)=1$, we conclude that $N$ is abelian. But then $N$, being generated by $\pi$-elements, must be a $\pi$-group. This is a contradiction. The proof is complete.
\end{proof}
We will now proceed to show that the finite groups $G$ satisfying $\delta_k^*(G)=1$ are precisely the soluble groups with Fitting height at most $k$. Recall that the Fitting height $h=h(G)$ of a finite soluble group $G$ is the minimal number $h$ such that $G$ possesses a normal series all of whose quotients are nilpotent. 

Following \cite{lau} we call a subgroup $H$ of $G$ a tower of height $h$ if $H$ can be written as a product $H=P_1\cdots P_h$, where

(1) $P_i$ is a $p_i$-group ($p_i$ a prime) for $i=1,\dots,h$.

(2) $P_i$ normalizes $P_j$ for $i<j$.

(3) $[P_i,P_{i-1}]=P_i$ for $i=2,\dots,h$.

It follows from (3) that $p_i\neq p_{i+1}$ for $i=1,\dots,h-1$. A finite soluble group $G$ has Fitting height at least $h$ if and only if $G$ possesses a tower of height $h$ (see for example Section 1 in \cite{turull}).

We will need the following lemma.

\begin{lemma}\label{12} Let $P_1\cdots P_h$ be a tower of height $h$. For every $1\leq i\leq h$ the subgroup $P_i$ is generated by $\delta_{i-1}^*$-commutators contained in $P_i$.
\end{lemma}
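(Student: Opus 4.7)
The plan is to induct on $i$. The base case $i=1$ is immediate since every element of the group is a $\delta_0^*$-commutator. For $i=2$, the identity $P_2=[P_2,P_1]$ combined with $p_1\neq p_2$ shows that the standard commutator generators $[x,y]$ of $P_2$ (with $x\in P_2$, $y\in P_1$) are $\delta_1^*$-commutators directly from the definition, and they lie in $P_2$ because $P_1$ normalizes $P_2$.

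For the inductive step with $i\geq 3$, I would let $B$ be the set of all $\delta_{i-2}^*$-commutators of $G$ lying in $P_{i-1}$. By the inductive hypothesis $\langle B\rangle=P_{i-1}$, and $B$ is closed under conjugation in $P_{i-1}$ (the defining conditions of $\delta_k^*$-commutators are clearly conjugation invariant), so $B$ is a normal generating subset of $P_{i-1}$. I would then invoke Lemma~\ref{2} with $A=P_{i-1}$ acting on the $p_i$-group $P_i$, using parameter $i-2\geq 1$, to conclude that $P_i=[P_i,P_{i-1}]$ is generated by the subgroups $[P_i,b_1,\dots,b_{i-2}]$ as $b_1,\dots,b_{i-2}$ range over $B$.

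The final step is to identify the generators of each such subgroup as $\delta_{i-1}^*$-commutators. A routine expansion via $[uv,w]=[u,w]^v[v,w]$ shows that $[P_i,b_1,\dots,b_{i-2}]$ is generated by the iterated commutators $[x,b_1,\dots,b_{i-2}]$ for $x\in P_i$ together with their conjugates, and Lemma~\ref{1113}, applied with $N=P_i$, $k=i-2$, and the $b_j$ in the role of the required $\delta_{i-2}^*$-commutators of order coprime to $|P_i|$, guarantees that each such iterated commutator is a $\delta_{i-1}^*$-commutator. Since the class of $\delta_{i-1}^*$-commutators is closed under conjugation, this completes the inductive step.

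The main obstacle I expect is the careful matching of indices: the number $i-2$ of iterated commutators furnished by Lemma~\ref{2} must agree exactly with the number of $\delta_{i-2}^*$-commutators that Lemma~\ref{1113} needs to consume in order to output a $\delta_{i-1}^*$-commutator. This matching is also the reason the case $i=2$ has to be peeled off and handled by hand, since it corresponds to the degenerate choice of zero commutators, which lies outside the range of Lemma~\ref{1113} as stated.
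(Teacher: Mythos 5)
Your proposal is correct and follows essentially the same route as the paper: induction on $i$, taking $B$ to be the $\delta_{i-2}^*$-commutators generating $P_{i-1}$, applying Lemma~\ref{2} to write $P_i=[P_i,P_{i-1}]$ as generated by the subgroups $[P_i,b_1,\dots,b_{i-2}]$, and then invoking Lemma~\ref{1113}. Your extra care with the degenerate case $i=2$ and with expanding $[P_i,b_1,\dots,b_{i-2}]$ into conjugates of iterated commutators only fills in details the paper leaves implicit in its ``immediate'' final step.
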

\begin{proof} If $i=1$ the lemma is obvious so we suppose that $i\geq 2$ and use induction on $i$. Thus, we assume that $P_{i-1}$ is generated by $\delta_{i-2}$-commutators contained in $P_{i-1}$. Denote the set of $\delta_{i-2}$-commutators contained in $P_{i-1}$ by $B$. Combining Lemma \ref{2} with the fact that $P_i=[P_i,P_{i-1}]$, we deduce that $P_i$ is generated by subgroups of the form $[P_i,b_1,\dots,b_{i-2}]$, where $b_1,\dots,b_{i-2}\in B$. The result is now immediate from Lemma \ref{1113}.
\end{proof}

\begin{theorem}\label{crite} Let $G$ be a finite group and $k$ a positive integer. We have $\delta_k^*(G)=1$ if and only if $G$ is soluble with Fitting height at most $k$.
\end{theorem}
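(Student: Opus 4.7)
The plan is to prove the two implications separately, deriving the harder direction from Theorem \ref{suzuki} together with the tower characterization of Fitting height, and the easier direction by induction on $k$.

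For the forward implication, I would first apply Theorem \ref{suzuki} (say with any set $\pi$ of at most two primes) to conclude from $\delta_k^*(G)=1$ that $G$ is soluble. To bound the Fitting height, I would argue by contradiction: if $h(G)\geq k+1$, the tower characterization quoted from \cite{turull} produces a tower $P_1\cdots P_{k+1}$ of height $k+1$ inside $G$. Lemma \ref{12}, applied with $i=k+1$, then tells us that $P_{k+1}$ is generated by $\delta_k^*$-commutators contained in $P_{k+1}$. Since by assumption every $\delta_k^*$-commutator of $G$ is trivial, we would conclude $P_{k+1}=1$, contradicting the definition of a tower of that height. Hence $h(G)\leq k$.

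For the reverse implication, I would proceed by induction on $k$. The base case $k=1$ is the standard fact that in a nilpotent group any two elements of coprime orders commute, so $\delta_1^*(G)=1$. For the inductive step, assume $h(G)\leq k$. Then $G/F(G)$ is soluble of Fitting height at most $k-1$, so by the inductive hypothesis $\delta_{k-1}^*(G/F(G))=1$. Using the observation recorded at the end of Section 1 (images of $\delta_{k-1}^*$-commutators are $\delta_{k-1}^*$-commutators in the quotient), every $\delta_{k-1}^*$-commutator of $G$, and hence every power of one, lies in $F(G)$. Given two such elements $a,b\in F(G)$ with $(|a|,|b|)=1$, the nilpotency of $F(G)$ forces $[a,b]=1$, since coprime elements in a nilpotent group commute. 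Thus every $\delta_k^*$-commutator in $G$ is trivial, i.e.\ $\delta_k^*(G)=1$.

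The main obstacle, I expect, is not conceptual but bookkeeping: one must carefully use the remark after the definitions in Section 1 to pass $\delta^*$-commutator structure through the quotient $G/F(G)$, and one must be careful that \emph{powers} of $\delta_{k-1}^*$-commutators, not just the commutators themselves, wind up in $F(G)$ before invoking nilpotency. Both of these are immediate once noted, so the proof should be quite short, with essentially all the real work already packaged in Theorem \ref{suzuki} and Lemma \ref{12}.
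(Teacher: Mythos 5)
Your proof is correct, and the ``only if'' half is exactly the paper's argument: Theorem \ref{suzuki} gives solubility (the hypothesis holds vacuously since all $\delta_k^*$-commutators are trivial), and a tower $P_1\cdots P_{k+1}$ together with Lemma \ref{12} gives the contradiction $P_{k+1}=1$. For the ``if'' half, however, you take a genuinely different route. The paper works with the lower Fitting series $G=N_1\geq N_2\geq\dots\geq N_t=1$ and proves by induction on $i$ the stronger identification $N_i=\delta_{i-1}^*(G)$, using the freshly proved ``only if'' direction to get the inclusion $N_i\leq\delta_{i-1}^*(G)$; the desired conclusion is then the case $i=t\leq k+1$. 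You instead induct on $k$ through the quotient $G/F(G)$: from $\delta_{k-1}^*(G/F(G))=1$ you place all $\delta_{k-1}^*$-commutators of $G$ (hence their powers) inside $F(G)$, and nilpotency of $F(G)$ kills every $\delta_k^*$-commutator. Your version is slightly more economical and self-contained (it does not feed the forward implication back into the converse), while the paper's version yields the extra information that the subgroups $\delta_{i-1}^*(G)$ coincide with the terms of the lower Fitting series, which is of independent interest. One small correction: the fact you need is that the \emph{image} of a $\delta_{k-1}^*$-commutator of $G$ is a $\delta_{k-1}^*$-commutator of $G/F(G)$; the remark at the end of Section 1 states the converse (lifting) direction. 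The projection statement is nonetheless immediate by induction on the commutator depth, since orders of images divide the original orders and coprimality is preserved, so this is a citation slip rather than a gap.
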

\begin{proof} Assume that $\delta_k^*(G)=1$. We know from Theorem \ref{suzuki} that $G$ is soluble. Suppose that $h(G)\geq k+1$. Then $G$ possesses a tower $P_1\cdots P_{k+1}$ of height $k+1$. Lemma \ref{12} shows that $P_{k+1}$ is generated by $\delta_k^*$-commutators. Since $\delta_k^*(G)=1$, it follows that $P_{k+1}=1$, a contradiction.

Now suppose that $G$ is soluble with Fitting height at most $k$. Let $$G=N_1\geq N_2\dots\geq N_{t}=1$$ be the lower Fitting series of $G$. Here the subgroup $N_2=\gamma_\infty(G)$ is the last term of the lower central series of $G$, the subgroup $N_3=\gamma_\infty(N_2)$ is the last term of the lower central series of $N_2$ etc. Let us show that $N_i=\delta_{i-1}^*(G)$ for every $i=1,2,\dots,t$. This is clear for $i=1$ and so suppose that $i\geq 2$ and use induction on $i$. Thus, we assume that $N_{i-1}=\delta_{i-2}^*(G)$. Since $N_i=\gamma_\infty(N_{i-1})$, it follows that $N_i$ contains all commutators of elements of coprime orders in $N_{i-1}$. In particular, $N_i\geq\delta_{i-1}^*(G)$. On the other hand, the previous paragraph shows that $h(G/\delta_{i-1}^*(G))\leq i-1$ and therefore $N_i\leq\delta_{i-1}^*(G)$. Hence, indeed $N_i=\delta_{i-1}^*(G)$. It is clear that $t\leq k+1$ and therefore $\delta_k^*(G)=1$.
\end{proof}

Now a simple combination of Theorem \ref{crite} with Theorem \ref{suzuki} yields the following corollary.

\begin{corollary}\label{uki} Let $k$ a positive integer, $p$ a prime and $G$ a finite group in which all $\delta_k^*$-commutators are $p$-elements. Then $G$ is soluble and $h(G)\leq k+1$.
\end{corollary}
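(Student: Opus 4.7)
The plan is to combine Theorem~\ref{suzuki} and Theorem~\ref{crite} via the fact that Fitting height is sub-additive on normal series. First, I would apply Theorem~\ref{suzuki} with the singleton $\pi=\{p\}$, which certainly satisfies the hypothesis of consisting of at most two primes. Since all $\delta_k^*$-commutators of $G$ are $p$-elements, Theorem~\ref{suzuki} yields simultaneously that $G$ is soluble and that $N:=\delta_k^*(G)\leq O_p(G)$. In particular $N$ is a $p$-group, hence nilpotent, so $h(N)\leq 1$.

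Next I would pass to the quotient $\bar G=G/N$ and verify that $\delta_k^*(\bar G)=1$. The remark recorded at the end of Section~1 says that every $\delta_k^*$-commutator of a quotient of $G$ lifts to a $\delta_k^*$-commutator of $G$ modulo the kernel. Since every $\delta_k^*$-commutator of $G$ lies in $N$ by definition, its image in $\bar G$ is trivial, so $\delta_k^*(\bar G)=1$. Theorem~\ref{crite} then forces $\bar G$ to be soluble with $h(\bar G)\leq k$.

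Finally, the standard inequality $h(G)\leq h(G/N)+h(N)$ for a normal subgroup $N$ gives $h(G)\leq k+1$, as required. There is really no serious obstacle: the corollary is a clean one-line composition of the two preceding theorems. The only point that requires any care is the lifting step for $\delta_k^*$-commutators, and this is precisely the observation the author records immediately after introducing the definition of $\delta_k^*$-commutators.
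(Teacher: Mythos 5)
Your proposal is correct and follows essentially the same route as the paper: apply Theorem~\ref{suzuki} with $\pi=\{p\}$ to get solubility and $\delta_k^*(G)\leq O_p(G)$, then use the lifting remark and Theorem~\ref{crite} on the quotient, and add one for the nilpotent kernel. The only cosmetic difference is that you quotient by $\delta_k^*(G)$ while the paper quotients by $O_p(G)$; both kernels are $p$-groups and the argument is identical in substance.
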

\begin{proof} Indeed, by Theorem \ref{suzuki} $\delta_k^*(G)\leq O_p(G)$ and by Theorem \ref{crite} $h(G/O_p(G))\leq k$.
\end{proof}

\section{Commutators in the alternating groups}

If $\pi$ is set of primes and $G$ a finite group in which all $\delta_k$-commutators are $\pi$-elements, then $G^{(k)}\leq O_\pi(G)$. This is straightforward from the main result of \cite{focal}. It seems likely that if $\pi$ is set of primes and $G$ a finite group in which all $\delta_k^*$-commutators are $\pi$-elements, then $\delta_k^*(G)\leq O_\pi(G)$. Theorem \ref{suzuki} tells us that this is true whenever $\pi$ consists of at most two primes and it is easy to adopt the proof of Theorem \ref{suzuki} to show that this is true in the case where $G$ is soluble. One possible approach to the general case would be via a modification of the well-known Ore Conjecture.

In 1951 Ore conjectured that every element of a nonabelian finite simple group is a commutator. Ore's conjecture has been confirmed almost sixty years later by Liebeck, O'Brien, Shalev and Tiep \cite{lost}. Ore himself proved that every element of a simple alternating group $A_n$ is a commutator. Our proof of Theorem \ref{suzuki} suggests that perhaps every element of a nonabelian finite simple group is a commutator of elements of coprime orders. The goal of this section is to show that this is true for the alternating groups $A_n$. More precisely, we will prove the following theorem.

\begin{theorem}\label{alter} Let $n\geq 5$. Every element of the alternating group $A_n$ is a commutator of an element of odd order and an element of order dividing 4.
\end{theorem}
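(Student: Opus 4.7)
The plan is to follow Ore's original case-analysis that every element of $A_n$ is a commutator, but to refine the construction so that one factor has odd order and the other has order dividing $4$. I would argue by induction on $n$, with the base cases $n=5$ and $n=6$ settled by direct inspection of the conjugacy classes involved. For the inductive step with $n\geq 7$, observe that if $\sigma\in A_n$ fixes at least two points then a conjugate of $\sigma$ lies in the naturally embedded $A_{n-2}\subset A_n$, and the inductive hypothesis supplies a factorization of the required form there, which persists in $A_n$ with the same orders. Thus the substantive case is when the support of $\sigma$ covers at least $n-1$ points.

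The backbone of the construction is the identity $[a,b]=a^{-1}a^{b}$. I would decompose $\sigma=\sigma_{\mathrm{odd}}\,\sigma_{\mathrm{even}}$ into the product of its odd-length cycles and its even-length cycles; since $\sigma\in A_n$, the number of even cycles is even, and so each of $\sigma_{\mathrm{odd}}$ and $\sigma_{\mathrm{even}}$ is already an even permutation on its own (disjoint) support. I would then construct commutators $\sigma_{\mathrm{odd}}=[a_1,b_1]$ and $\sigma_{\mathrm{even}}=[a_2,b_2]$ on the respective supports and combine: $\sigma=[a_1a_2,b_1b_2]$, with $|a_1a_2|=\operatorname{lcm}(|a_1|,|a_2|)$ odd and $(b_1b_2)^4=1$ by disjointness. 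For $\sigma_{\mathrm{odd}}$, take $a_1$ to be the power of $\sigma_{\mathrm{odd}}$ satisfying $a_1^{-2}=\sigma_{\mathrm{odd}}$ (possible because $|\sigma_{\mathrm{odd}}|$ is odd, so $-2$ is invertible modulo it) and $b_1$ an involution inverting $a_1$; for $\sigma_{\mathrm{even}}$, pair up equal-length even cycles and use an order-$4$ element $b_2$ that rotates each pair together with an odd-order $a_2$ permuting diagonally within each cycle of the pair.

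The principal obstacle is the parity bookkeeping required to keep $b\in A_n$: the natural involution inverting a cycle of odd length $\ell$ has $(\ell-1)/2$ transpositions, and so lies in $A_n$ only when $\ell\equiv 1\pmod 4$. When $\ell\equiv 3\pmod 4$ and no spare fixed points are available to absorb an adjusting transposition, one must either give up the neat disjoint-support decomposition and let $a$ and $b$ have overlapping supports, or replace the involution by a genuine order-$4$ element of $A_n$ whose commutator with the chosen odd-order $a$ still evaluates to $\sigma$. Verifying that such order-$4$ elements exist and act correctly in each borderline cycle-type configuration---single long cycles of length $\equiv 3\pmod 4$, unequal pairs of even cycles, and ``full support'' elements---is the delicate combinatorial heart of the proof.
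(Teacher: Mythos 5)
Your proposal reproduces the easy half of the argument but leaves the actual content unproved. The skeleton you describe is the same as the paper's: handle odd cycles by writing the cycle as $y^{-2}$ for an odd-order power $y$ inverted by an involution, handle the even cycles in pairs, and glue disjoint supports together. The paper does not need your induction on $n$ at all --- the reduction is simply that each odd cycle, and each pair of even cycles, is treated inside its own support --- so the inductive framing buys you nothing: the ``full support'' configurations you defer to the inductive step are exactly the single-cycle and cycle-pair cases that must be settled by hand for every length, and they are where all the work lies.

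Concretely, the cases you label ``the delicate combinatorial heart of the proof'' and do not carry out are precisely what the paper's proof consists of. For a single odd cycle $x=(1,2,\dots,n)$ with $(n-1)/2$ odd, the fix is not a minor adjustment of your scheme: one abandons the choice of $y$ as a power of $x$ and instead takes an overlapping odd cycle $y_1=(n,m,n-1,m-1,m-2,\dots,2,1)$ of length $m+2$ together with $b_1=(m+1,m+2)b$, where $b=(n-1,n)(1,n-2)(2,n-3)\cdots(m-1,m+1)$; one checks $x=[y_1,b_1]$, $|b_1|=4$, $b_1\in A_n$. For a pair of even cycles of lengths $2i$ and $2j$, your plan to ``pair up equal-length even cycles'' only covers $i=j$; the paper gives explicit elements $y_2=(2i,n,n-1,\dots,i+j+1)$ and an order-$4$ element $a_2$, with parity corrections $b_0a_2$ (when $i+j$ is even) or $(1,n)y_2$ (when $i+j$ is odd) that commute with the other factor, and a separate construction $a_3=(1,n)(2,n-1)\cdots(2i,2i+1)$ for $i=j$. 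Since you assert only that such order-$4$ elements ``must exist and act correctly'' without producing or verifying them, the proposal has a genuine gap: it is a plan for a proof, not a proof, and none of the borderline cases it flags can be waved through by the disjoint-support bookkeeping you do supply.
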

\begin{proof}
Let $x\in A_n$. The decomposition of $x$ into product of independent cycles may contain cycles of odd order and an even number of cycles of even order. Our theorem follows, therefore, if one can show that every cycle of odd order and every pair of cycles of even order are commutators of the required form in elements lying in $A_n$ and moving only symbols involved in the cycles. In the arguments that follow we more than once use the fact that for any $i,j,k,l\leq n$ we have $$(i,j)(k,l)(j,k)=(i,k,l,j),$$ which is of order four. Here and throughout the products of permutations are executed from left to right.

First consider the case where $x$ is the cycle $(1,2,\dots,n)$ with $n$ odd.
Suppose that $m=\frac{n-1}{2}$ is even and let $y=x^m$. Consider the product of $m$ transpositions $$a=(1,n)(2,n-1)\dots(m,m+2).$$ It is clear that $x^a=x^{-1}$ and $[y,a]=y^{-2}=(x^{m})^{-2}=x$.  Thus, we have $x=[y,a]$ where $|y|=n$ and $|a|=2$. Of course, both $y$ and $a$ are elements of $A_n$.

Now suppose that $m$ is odd. The previous argument is not quite adequate for this case as the product $(1,n)(2,n-1)\dots(m,m+2)$ does not belong to $A_n$. Set $$y_1=(n,m,n-1,m-1,m-2,\dots,2,1).$$ Thus, $y_1$ is a cycle of order $m+2$, which is odd. Consider the product of $m$ transpositions $$b=(n-1,n)(1,n-2)(2,n-3)\dots(m-1,m+1).$$ It is straightforward to check that $x=[y_1,b]$. Let $b_1$ denote the product of the transposition $(m+1,m+2)$ with $b$. Thus,  $b_1=(m+1,m+2)b$ and $|b_1|=4$. Since the transposition $(m+1,m+2)$ commutes with $y_1$, it follows that $x=[y_1,b_1]$. Finally we remark that $b_1\in A_n$ and so the expression $x=[y_1,b_1]$ is the required one.

Now we consider the case where $n=2i+2j$ and $x$ is the product of two cycles of even sizes $x=(1,2,\dots,2i)(2i+1,2i+2,\dots,2i+2j)$. We assume that $i\leq j$ and consider first the case where $i\neq j$. Put $y_2=(2i,n,n-1,\dots,i+j+1)$ and let $a_2$ be the product of the cycle $(2j+1,2i,i+j+1,i+j)$ with the $i+j-2$ transpositions of the form $(m_1,m_2)$, where $m_1+m_2=n+1$ and $m_1\not\in\{i+j+1,2i,2j+1,i+j\}$. We see that $x=[y_2,a_2]$. Moreover $|a_2|=4$ while $|y_2|=n/2+1$. 

Suppose that $i+j$ is even. In this case $y_2\in A_n$ but $a_2\not\in A_n$. Therefore we will replace $a_2$ by an element $b_2$, of order 4, such that $[y_2,a_2]=[y_2,b_2]$ and $b_2\in A_n$. Choose a transposition $b_0=(l,k)$ such that $l,k\geq i+j+2$. Then $b_0$ commutes with $y_2$ since $l,k$ are not involved in $y_2$. Hence $[y_2,a_2]=[y_2,b_0a_2]$. One checks that $b_0a_2$ is of order 4 and $b_0a_2\in A_n$. Thus, taking $b_2=b_0a_2$ gives us the required expression $x=[y_2,b_2]$.

Assume now that $i+j$ is odd. Then $a_2\in A_n$ while $y_2\not\in A_n$. Remark that $a_2$ commutes with the transposition $(1,n)$. Set $y_3=(1,n)y_2$. Then we have $[y_2,a_2]=[y_3,a_2]$. We see that $y_3=(2i,n,1,n-1,\dots,i+j+1)$ and this is an element of odd order. Therefore the expression $x=[y_3,a_2]$ is of the required type.

Finally, we have to consider the case where $i=j$. Now $y_2=(2i,n,n-1,\dots,2i+1)$ and this belongs to $A_n$. Put $$a_3=(1,n)(2,n-1)\dots(2i,2i+1).$$ Note that $a_3\in A_n$. We have $x=[y_2,a_3]$ and the expression $x=[y_2,a_3]$ is as required.
\end{proof}

\end{document}